\DeclareMathOperator{\Fold}{{Fold}}
\newtheorem{theorem}{Theorem}[section]
\newtheorem{lemma}[theorem]{Lemma}
\theoremstyle{definition}
\newtheorem{algorithm}[theorem]{Algorithm}
\theoremstyle{remark}
\newtheorem{remark}[theorem]{Remark}
\numberwithin{equation}{section}
\begin{document}

\title[
Paley's Theorem for Hankel Matrices
]
{Paley's Theorem\\
for Hankel Matrices\\
via the Schur Test}
\author
[John Fournier]
{John J.F. Fournier}
\address
[John Fournier]
{Department of Mathematics\\
University of British Columbia\\
1984 Mathematics Road\\
Vancouver BC\\
Canada V6T 1Z2}
\email
{fournier@math.ubc.ca}
\author[Bradley Wagner]{Bradley G. Wagner}
\address
[Bradley Wagner]
{Institute for Disease Modeling,
1555 132nd Ave NE, Bellevue WA 98005,
USA}
\email
{bwagner@intven.com}
\subjclass[2010]{Primary {47B35, 42A55};
Secondary {15A45, 15A60}}
\thanks{Results mostly announced
at the Peter Borwein Conference in 2008.}
\thanks{The research of the first author
was supported by NSERC grant 4822.}
\thanks{The research of the second author
was supported by an NSERC USRSA award.}


\begin{abstract}
Paley's theorem 
about
lacunary coefficients
of functions in the classical space $H^1$ 
on the unit 
circle
is equivalent to the statement
that certain Hankel matrices define
bounded operators on~$\ell^2$
of the nonnegative integers.
Since
that
statement reduces easily to the case where the entries
in the matrix are all 
nonnegative,
it must be provable
by the Schur test.
We give such
proofs with interesting patterns
in the vectors used in the test, and we recover the best constant in the main case.
We use 
related
ideas to reprove
the characterization of 
Paley
multipliers
from~$H^1$ to~$H^2$.

\end{abstract}

\maketitle

\markleft{
Fournier and Wagner}

\section{Introduction}
\label{sec:intro}

Consider semi-infinite
matrices
with the \emph{Hankel symmetry}
in which
the entries
only depend on the sum of the indices.
We use a method of Schur to prove that matrices like
\[
A_v =
\begin{bmatrix}
v_0
& v_1& 0& v_2& 0& 0& 0& v_3& \cdots\\
v_1& 0& v_2& 0& 0& 0& v_3& 0& \cdots\\
0& v_2& 0&  0& 0& v_3& 0& 0& \cdots\\
v_2& 0&  0& 0& v_3& 0& 0& 0& \cdots\\
0&  0& 0& v_3& 0& 0& 0& 0& \cdots\\
0& 0& v_3& 0& 0& 0& 0& 0& \cdots\\
0& v_3& 0& 0& 0& 0& 0& 0& \cdots\\
v_3& 0& 0& 0& 0& 0& 0& 0& \cdots\\
\vdots& \vdots& \vdots& \vdots& \vdots&
\vdots& \vdots& \vdots&
\ddots&\\
\end{bmatrix},
\]
where there are relatively large gaps
between
most 
nonzero antidiagonals,
act boundedly on~$\ell^2$ when~$v \in \ell^2$.
It remains unclear how to use the Schur test
to prove boundeness, when it holds, for a general Hankel matrix
with nonnegative entries.

To get~$A_v$, fix
a strictly increasing
sequence $(k_j)_{j=0}^\infty$
of 
nonnegative
integers,
and denote its range by $K$.
Let $(v_j)_{j=0}^\infty$ be a sequence
of complex numbers.
Let $a_v$ be the function on the nonnegative integers
with $a_v(k_j) = v_j$ for all~$j$
and $a_v(n) = 0$ when~$n \notin K$.
For any sequence~$(a(n))_{n=0}^\infty$,
let~$H_a$ be the 
matrix with entries
\begin{equation}\label{eq:MakeA}
H_a(m, n) = a(m+n).
\end{equation}
Then let~$A_v = H_{a_v}$.

We start the indices~$m$ and~$n$ at $0$ rather than $1$
to simplify our discussion
of Theorems \ref{th:HankPaley}
and \ref{th:Paley} below.
The sequence~$a_v$ and the matrix $A_v$
also depend on the set $K$,
but we suppress that fact in the notation.
Recall that $K$ is called a {\em Hadamard set} if
\[
k_{j+1} > (1 + \varepsilon)k_j
\]
for some positive
constant~$\varepsilon$ and all values of~$j$.

Most proofs of Theorem~\ref{th:HankPaley} below
proceed via 
its counterpart
Theorem \ref{th:Paley},
rather than working directly with the matrix formulation.
It is easy, however, to convert the proof
in~\cite[
pp.~274--275
]{KP} 
into such a direct proof.
There are references to other elementary proofs
in~\cite{Ben}, but not to one via the Schur test.
We discuss various forms of 
that
test
in Section \ref{sec:Schur},
recall its proof in Section~\ref{sec:proof},
and
use 
it
in Section~\ref{sec:Paley} to prove the 
following statement.

\begin{theorem}\label{th:HankPaley}
Let $K$ be a Hadamard set,
and let $v \in \ell^2$.
Then the matrix $A_v$ represents
a bounded operator on~$\ell^2$.
\end{theorem}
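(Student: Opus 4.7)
The approach is Schur's test, as the paper announces. First I would reduce to the case $v_j \ge 0$: since $|A_v(m,n)| = A_{|v|}(m,n)$ we have $\|A_v\|_{\ell^2\to\ell^2} \le \|A_{|v|}\|_{\ell^2\to\ell^2}$, so the modulus case suffices. Note also that $A_v$ is symmetric, by the Hankel property, so the one-weight form of Schur applies: it is enough to find $w : \mathbb{N} \to (0,\infty)$ and $C > 0$ with
\[
\sum_n A_v(m,n)\, w(n) \;\le\; C\, w(m) \quad\text{for all } m \in \mathbb{N},
\]
which then yields $\|A_v\|_{\ell^2\to\ell^2} \le C$. Since row $m$ has the entry $v_j$ exactly in column $k_j - m$ for each $k_j \ge m$, the row sum unfolds to $\sum_{j: k_j \ge m} v_j\, w(k_j - m)$.

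The second step designs $w$ to exploit the Hadamard geometry. A natural ansatz is to make $w$ constant on each Hadamard level $L_i = \{n : k_{i-1} < n \le k_i\}$ (setting $k_{-1} := -1$), with values $w|_{L_i} = r^{i}$ for some $r \in (0,1)$ to be tuned against $\varepsilon$. For $m \in L_i$, the row sum splits into a near-diagonal block $j \in \{i, \ldots, i+J-1\}$, where $J = \lceil \log 2/\log(1+\varepsilon)\rceil$ is the doubling index, and a geometric tail $j \ge i+J$. On the tail, $k_j - m \ge k_j - k_i \ge k_j/2 \ge k_{j-J}$, so the column lies at level at least $j - J$ and $w(k_j - m) \le r^{j-J}$; a Cauchy--Schwarz in the index $j$ then bounds the tail by $\|v\|_2 \cdot r^{i}/\sqrt{1-r^2}$, which is $O_\varepsilon(\|v\|_2)\cdot w(m)$ as needed.

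The main obstacle is the near-diagonal block. For $j$ close to $i$, the column $k_j - m$ can drop into any lower level, so the crude estimate $w(k_j - m) \le 1$ gives a near-diagonal contribution of order $\sqrt{J}\,\|v\|_2$, which is not dominated by $C r^{i} = C\,w(m)$ when $i$ is large. Overcoming this likely requires a finer weight structure within each level, a two-weight variant with $p \ne q$ whose rates balance against each other, or a sharper Cauchy--Schwarz that redistributes the mass of $v_j$ against the available positions in the near-diagonal columns. Tuning this choice is exactly where the ``interesting patterns in the vectors used in the test'' mentioned in the abstract enter, and where the sharp Paley constant should be recovered in the main case; once such a $w$ (or such $p, q$) is in hand, the Schur test immediately delivers $\|A_v\|_{\ell^2\to\ell^2} \le C(\varepsilon)\,\|v\|_2$ and Theorem~\ref{th:HankPaley} follows.
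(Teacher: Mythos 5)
Your setup is correct: reducing to $v\ge0$, noting the Hankel symmetry, and aiming for the one-weight Schur test $A_v w \le C w$ are all the right moves. But the weight you propose --- constant on each Hadamard level $L_i$ with $w|_{L_i}=r^i$ --- cannot work, and the difficulty you flag at the near-diagonal block is not a technical nuisance to be patched; it is fatal to this ansatz. Concretely, take $m=k_i$, so $m\in L_i$ and $w(m)=r^i$. Then $(A_v w)(m)\ge A_v(m,0)\,w(0)=v_i\cdot 1$, and requiring $v_i\le C\,r^i$ for all $i$ forces $v$ to decay geometrically --- far stronger than $v\in\ell^2$. Any weight that depends only on the ``level'' of $n$ and not on $v$ has the same defect, so neither a finer within-level structure, nor a two-weight variant, nor a sharper Cauchy--Schwarz rescues the plan as you describe it: the Schur weight must incorporate the sequence $v$ itself.

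The paper avoids this in two ways, neither of which you reach. In Section~\ref{sec:Paley} it uses the factorization form (condition~\eqref{en:factor} of Lemma~\ref{th:Schur}) rather than the weight form: write $A_v\star A_v=B\star C$ with $C=B^*$, where above the main diagonal $B=A_v\star A_v$ and $C$ is the $0$--$1$ indicator of the special antidiagonals $m+n=k_j$, with the roles swapped below the diagonal. Each row sum of $B$ then splits into a diagonal term $\le\|v\|_2$, an above-diagonal sum $\sum_{k_j>2m}v_j^2\le\|v\|_2^2$, and a below-diagonal count of indices $k_j\in[m,2m)$, which the Hadamard condition bounds by a constant $M$ --- giving the uniform row bound~\eqref{eq:MixedUpperBound} with no tuning at all. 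Alternatively, Section~\ref{sec:Folding} does produce a genuine Schur weight $u$ for the symmetric test, but it is built recursively from $v$ by the ``folding'' rule $u(k_j-n)=v_j\,u(n)$ for $0\le n<k_j/2$, which yields $u(k)=v_{j_1}v_{j_2}\cdots v_{j_r}$ on the alternating-sum set $\Fold(K)$. That multiplicative, $v$-dependent structure is exactly what makes the near-diagonal contributions match $u(m)$ up to a factor, and it is the missing ingredient in your proposal.
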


As noted in~\cite{KP}, this
is
equivalent to the 
better-known
result of Paley \cite{Paley} below.
%
Given a function~$f$ in $L^1(-\pi,\pi]$,
denote its Fourier coefficients by
\[
\hat f(n) = \frac{1}{2\pi}
\int_{-\pi}^{\pi} f(t)e^{-int}\, dt.
\]
Denote the set of such functions $f$ for which
$\hat f(n) = 0$ for all $n < 0$ by~$H^1$,
and let $H^2$ be the set of functions in $L^2$
with the same property.

\begin{theorem}\label{th:Paley}
Let $K$ be a Hadamard set.
If $f \in H^1$,
then the restriction of the coefficients
of $f$ to the set $K$ belongs to $\ell^2(K)$.
\end{theorem}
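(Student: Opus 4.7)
My plan is to deduce Theorem~\ref{th:Paley} from Theorem~\ref{th:HankPaley} by a duality argument. The matrix $A_v$ is precisely the object that pairs a sequence $v\in\ell^2(K)$ against the Fourier coefficients of an $H^1$ function, once that function has been factored as a product of two $H^2$ functions.

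First, I would invoke the classical Riesz factorization: every $f\in H^1$ admits a decomposition $f=gh$ with $g,h\in H^2$ and $\|g\|_2\|h\|_2=\|f\|_1$. Setting $x_n=\hat g(n)$ and $y_m=\overline{\hat h(m)}$ produces two $\ell^2$ sequences with $\|x\|_2=\|g\|_2$ and $\|y\|_2=\|h\|_2$. Because $g,h\in H^2$, their convolution gives
\begin{equation*}
\hat f(k) = \sum_{\substack{m+n=k\\ m,n\ge 0}}\hat g(n)\,\hat h(m),
\end{equation*}
so for any finitely supported $v$, interchanging summations produces
\begin{equation*}
\sum_j v_j\,\hat f(k_j) = \sum_{m,n\ge 0} a_v(m+n)\,x_n\,\overline{y_m} = \langle A_v x,\, y\rangle.
\end{equation*}

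Theorem~\ref{th:HankPaley}, whose proof via Schur's test should yield an operator-norm bound $\|A_v\|_{\ell^2\to\ell^2}\le C\|v\|_{\ell^2}$ with $C$ depending only on the Hadamard constant, combines with Cauchy--Schwarz to give
\begin{equation*}
\biggl|\sum_j v_j\,\hat f(k_j)\biggr| \le C\,\|v\|_{\ell^2}\,\|g\|_2\,\|h\|_2 = C\,\|v\|_{\ell^2}\,\|f\|_1.
\end{equation*}
For each $N$, choosing $v_j=\overline{\hat f(k_j)}$ for $j\le N$ and $v_j=0$ otherwise reduces this to $\bigl(\sum_{j\le N}|\hat f(k_j)|^2\bigr)^{1/2}\le C\|f\|_1$, and letting $N\to\infty$ yields Theorem~\ref{th:Paley} together with the quantitative bound $\bigl(\sum_j|\hat f(k_j)|^2\bigr)^{1/2}\le C\|f\|_1$. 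The main obstacle is the appeal to the Riesz factorization $H^1=H^2\cdot H^2$; since this is classical, I would cite it rather than reprove it, so the remaining work is just the bookkeeping above.
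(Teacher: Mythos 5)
Your proposal is correct and is essentially the route the paper takes: the paper proves Theorem~\ref{th:HankPaley} via the Schur test and then invokes (citing~\cite{KP}) the standard equivalence with Theorem~\ref{th:Paley}, which is precisely the Riesz factorization $H^1=H^2\cdot H^2$ together with the bilinear-form identity $\sum_j v_j\hat f(k_j)=\langle A_v x,y\rangle$ that you spell out. You have simply made explicit the reduction that the paper defers to a reference.
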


Another way to state this
is that multiplying the Fourier coefficients
of $H^1$ functions by $1_K$, the indicator function
of the set $K$,
gives the coefficients of functions in~$H^2$.
Sequences, like $1_K$, with this multiplier property
are now called {\em Paley multipliers.}
They 
played
a r\^ole in the 
resolution~\cite{PiSzN}
of the similarity problem of Sz.-Nagy and Halmos.
In Section~\ref{sec:multipliers},
we will 
reprove the following fact.

\begin{theorem}\label{th:Paley2}
Let $(a(n))_{n=0}^\infty$
be a sequence with the property that
\begin{equation}\label{eq:supsum2}
\sup_{j\ge0}
\left[
\sum_{2^j \le n < 2^{j+1}} |a(n)|^2
\right] 
< \infty.
\end{equation}
Then $a$ is a Paley multiplier.
\end{theorem}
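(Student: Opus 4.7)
The plan is to reduce Theorem \ref{th:Paley2} directly to Paley's inequality (Theorem \ref{th:Paley}) by a simple one-point-per-block selection argument. Since the hypothesis gives an $\ell^2$ bound, call it $M$, on $a$ inside each dyadic block $I_j := \{n : 2^j \le n < 2^{j+1}\}$, the contribution of $a\hat f$ on $I_j$ to $\|a\hat f\|_{\ell^2}^2$ is controlled by $M^2$ times the pointwise maximum of $|\hat f|^2$ on $I_j$. Summing on $j$ reduces the matter to an $\ell^2$ estimate for a single coefficient of $\hat f$ per block, which is exactly what Theorem \ref{th:Paley} delivers once the chosen indices are arranged into a Hadamard sequence.

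Concretely, for each $j \ge 0$ pick $k_j^* \in I_j$ at which $|\hat f|$ attains its maximum on $I_j$. Then for every $n \in I_j$ we have $|a(n)\hat f(n)|^2 \le |a(n)|^2 |\hat f(k_j^*)|^2$, and summing on $n \in I_j$ together with the hypothesis $\sum_{n \in I_j}|a(n)|^2 \le M^2$ yields
\[
\sum_{n \in I_j}|a(n)\hat f(n)|^2 \le M^2 |\hat f(k_j^*)|^2.
\]
Summing on $j$ gives
\[
\sum_{n \ge 1}|a(n)\hat f(n)|^2 \le M^2 \sum_{j \ge 0}|\hat f(k_j^*)|^2.
\]

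The main obstacle is that the sequence $(k_j^*)_j$ itself need not be Hadamard: the top of $I_j$ and the bottom of $I_{j+1}$ are consecutive integers, so consecutive ratios can be arbitrarily close to $1$. This is repaired by splitting by the parity of $j$. Since $k_{2l}^* < 2^{2l+1}$ and $k_{2l+2}^* \ge 2^{2l+2}$, we have $k_{2l+2}^*/k_{2l}^* > 2$ for all $l$, so $\{k_{2l}^*\}_l$ is a Hadamard set; the same reasoning applies to $\{k_{2l+1}^*\}_l$. Two applications of Theorem \ref{th:Paley}, one to each parity class, bound $\sum_j |\hat f(k_j^*)|^2$ by a constant times $\|f\|_{H^1}^2$, and combining with the previous display yields Theorem \ref{th:Paley2}. (The missing $n=0$ term is absorbed by a trivial estimate $|a(0)\hat f(0)| \le |a(0)|\,\|f\|_{L^1}$.)
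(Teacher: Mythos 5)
Your argument is correct but takes a genuinely different route from the paper. The paper proves Theorem~\ref{th:Paley2} by a duality step (a sequence~$b$ is a Paley multiplier~$H^1\to\ell^2$ if and only if~$bc$ is a multiplier from~$\widehat{H^1}$ to~$\ell^1$ for every~$c\in\ell^2$), then invokes Lemma~\ref{th:Kothe} and checks the hypothesis~\eqref{eq:sumsquaresum} for~$bc$ via Cauchy-Schwarz on each dyadic block. Your proof bypasses both the duality reduction and Lemma~\ref{th:Kothe}: you select a maximizer~$k_j^*$ of~$|\hat f|$ in each dyadic block~$I_j$, bound the block contribution of~$a\hat f$ by~$M^2|\hat f(k_j^*)|^2$, and then control~$\sum_j|\hat f(k_j^*)|^2$ by applying Paley's inequality to the two parity subsequences of~$(k_j^*)$, each of which is strongly lacunary since~$k_{j+2}^*\ge 2^{j+2}>2k_j^*$. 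This is a clean selection argument of Rudin type and is arguably more elementary than the paper's route, at the cost of requiring Theorem~\ref{th:Paley} rather than the independent Lemma~\ref{th:Kothe}.

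One point must be made explicit, because it is the hinge of the argument. The set~$\{k_j^*\}$ depends on the function~$f$, so you cannot apply Theorem~\ref{th:Paley} as a fixed-set qualitative statement; you need the quantitative form~\eqref{eq:PaleyHankelInequality}
\[
\Bigl(\sum_{l}|\hat f(k_l)|^2\Bigr)^{1/2}\le C_K\|f\|_{H^1},
\]
together with the fact that~$C_K$ can be taken \emph{uniform} over all strongly lacunary sets~$K$. Uniform boundedness applied to a single fixed~$K$ only yields a constant that could in principle depend on~$K$, which would not suffice here. The required uniformity is true and is in fact established in the paper (Section~\ref{sec:Paley} gives~$C_K\le 3$ for strongly lacunary~$K$, sharpened to~$\sqrt2$ in Section~\ref{sec:best}), so the argument is sound; you should cite that uniform constant rather than the qualitative Theorem~\ref{th:Paley} alone. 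With that adjustment, and your separate handling of the~$n=0$ term, the proof is complete.
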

This 
follows
from another 
statement
that we reprove 
by passing to its Hankel counterpart, and then
using a variant of the Schur test.

\begin{lemma}\label{th:Kothe}
Let $(a(n))_{n=0}^\infty$
be a sequence with the property that
\begin{equation}\label{eq:sumsquaresum}
\sum_{j\ge0}
\left[
\sum_{2^j \le n < 2^{j+1}} |a(n)|
\right]^2 < \infty.
\end{equation}
Then the sequence $(a(n)\hat f(n))_{n=0}^\infty$
belongs to $\ell^1$ for every function $f$ in~$H^1$.
\end{lemma}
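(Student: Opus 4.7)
My strategy is to reduce Lemma~\ref{th:Kothe} to the boundedness on $\ell^2$ of a Hankel matrix with nonnegative entries, and then verify that boundedness by applying the Schur test with weights that respect the dyadic block structure of the hypothesis.

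For the reduction, let $f\in H^1$ and use the F.~and M.~Riesz factorization to write $f=gh$ with $g,h\in H^2$ and $\|g\|_2\|h\|_2=\|f\|_{H^1}$. Because $\hat g$ and $\hat h$ vanish at negative indices, $\hat f(n)=\sum_{k=0}^{n}\hat g(k)\hat h(n-k)$ for every $n\ge 0$, and hence
\[
\sum_{n\ge 0}|a(n)\hat f(n)|
\;\le\;\sum_{k,\ell\ge 0}|a(k+\ell)|\,|\hat g(k)|\,|\hat h(\ell)|
\;=\;\bigl\langle H_{|a|}\,|\hat h|,\,|\hat g|\bigr\rangle_{\ell^2},
\]
where $H_{|a|}$ is the Hankel matrix with entries $H_{|a|}(m,n)=|a(m+n)|$. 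By Cauchy--Schwarz the problem reduces to showing that $H_{|a|}$ is bounded on $\ell^2$, with norm controlled by a constant times the square root of the sum in~\eqref{eq:sumsquaresum}.

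Since $H_{|a|}$ has nonnegative, symmetric entries, boundedness via the Schur test amounts to finding a positive sequence $w$ with $\sum_n|a(m+n)|w(n)\le C\,w(m)$ for every $m$. Writing $b_j=\sum_{2^j\le n<2^{j+1}}|a(n)|$, I would try $w$ constant on each dyadic block, $w(n)=\omega_j$ for $n$ in block $j$, so that the Schur condition becomes $\sum_j\omega_j\,s_j(m)\le C\,\omega_J$, where $s_j(m)=\sum_{n\in\text{block }j}|a(m+n)|$ and $J$ denotes the block of $m$. The key geometric fact is that the $2^j$ consecutive values of $m+n$ arising from $n\in$ block $j$ lie in at most two consecutive blocks of indices clustered near $\max(j,J)$; for $j<J$ the slabs $[m+2^j,m+2^{j+1})$ are disjoint and their union $[m+1,m+2^J)$ sits inside blocks $J$ and $J+1$ of $|a|$. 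Consequently each $s_j(m)$ is bounded by one or two of the $b_{j'}$'s.

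A natural candidate is $\omega_j=\bigl(\sum_{j'\ge j}b_{j'}^2\bigr)^{1/2}$, the tail $\ell^2$-norm of $(b_j)$; this dominates both $b_J$ and $b_{J+1}$, so the contributions from the low range $j\le J$ collapse to a constant multiple of $\|b\|_2\cdot\omega_J$. The main obstacle is the high range: bounding $\sum_{j>J}\omega_j(b_j+b_{j+1})$ by $C\,\omega_J$. The naive telescoping fails here, and one must exploit the identity $b_j^2=\omega_j^2-\omega_{j+1}^2$ together with Cauchy--Schwarz in~$j$ to absorb the $b_j$-factors against~$\omega_j$. This is precisely where the variant of the Schur test promised in the excerpt---plausibly an asymmetric two-weight form, or the standard Schur test combined with an initial Cauchy--Schwarz in the block index---comes into play, closing the estimate with a constant proportional to $\|b\|_2$; modulo this weight arithmetic, the argument is routine.
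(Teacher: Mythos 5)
Your reduction to showing that the Hankel matrix $H_{|a|}$ is bounded on $\ell^2$ is the correct first move (and is essentially what the paper does, phrased as the equivalence of Theorems~\ref{th:HankPaley} and~\ref{th:Paley}). The gap is in the second half: you propose the symmetric Schur test with a weight constant on dyadic blocks, $w(n)=\omega_j$ for $n$ in block~$j$ with $\omega_j=\bigl(\sum_{j'\ge j}b_{j'}^2\bigr)^{1/2}$, and then state that the remaining ``weight arithmetic\ldots is routine.'' It is not: this weight does not satisfy the Schur inequality. Take $b_j=(j+1)^{-1/2-\delta}$ with $0<\delta<1/4$, which satisfies~\eqref{eq:sumsquaresum}. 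Then $\omega_j\asymp(j+1)^{-\delta}$, so the high-range contribution $\sum_{j>J}\omega_j(b_j+b_{j+1})$ is comparable to $\sum_{j>J}(j+1)^{-1/2-2\delta}$, which diverges for every $J$. Even with $b_j=1$ for $j\le N$ and $0$ otherwise, the ratio $\bigl(\sum_{j>J}\omega_jb_j\bigr)/\omega_J$ grows like $N-J$ and is unbounded in $N$. So the ``main obstacle'' you identify is in fact fatal to this particular weight, and no Cauchy--Schwarz in $j$ will absorb it.

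The paper closes the estimate by working with condition~\eqref{en:factor} of Lemma~\ref{th:Schur} rather than with a single rank-one weight. Set $\phi(n)=\sum_{n\le r<2n}a(r)$, and define $B$ by multiplying the above-diagonal part of the $n$-th column of $H_a$ by $\phi(n)$ and dividing the below-diagonal part of the $m$-th row by $\phi(m)$; take $C=B^*$. The point is that the factor depends on $\max(m,n)$ and the sign of $n-m$, which is \emph{not} of the separable form $u(n)/w(m)$ that a Schur-test weight produces. With this choice the below-diagonal part of each row of $B$ sums to exactly $1$ by construction, the diagonal term is at most $\|a\|_2$, and the above-diagonal part of the $n$-th row equals $\sum_{s>n}a(n+s)\phi(s)$, which is majorized uniformly in $n$ by $\sum_i a(i)\sum_{i/2<r<2i}a(r)$. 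Splitting the last sum into dyadic blocks bounds it by $\sum_j\bigl(\sum_{2^{j-1}<i<2^{j+2}}a(i)\bigr)^2$, which is finite under~\eqref{eq:sumsquaresum}. Your plan correctly identifies the dyadic geometry but chooses the wrong form of the Schur criterion; you would need to abandon the constant-on-blocks rank-one weight and pass to the factorization with the non-separable factor $\phi(\max(m,n))^{\pm1}$.
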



Condition \eqref{eq:supsum2}
above is also necessary for $a$ be a Fourier
multiplier from~$H^1$ to $H^2$.
Condition \eqref{eq:sumsquaresum}
is not necessary for the conclusion of Lemma \ref{th:Kothe}.
Instead,
the strictly weaker condition~\eqref{eq:supdouble}
below is 
necessary
if~$a \ge 0$,
and 
sufficient in any case.
That
is 
an unpublished result of Charles Fefferman;
see~\cite[page~264]{AnSh}.

Consider
the corresponding sufficiency result for Hankel matrices.

\begin{theorem}\label{th:fefferman}
Let $(a(n))_{n=0}^\infty$
be a sequence with the property that
\begin{equation}\label{eq:supdouble}
\sup_{M > 0} \left\{\sum_{j=1}^\infty
\left[\sum_{jM \le n < (j+1)M} |a(n)|\right]^2\right\}
< \infty,
\end{equation}
Then the matrix~$H_a$ represents a bounded operator on~$\ell^2$.
\end{theorem}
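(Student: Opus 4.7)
The plan is to apply the Schur test to $H_a$ with carefully chosen weights. First, I would reduce to the case $a(n)\ge 0$: since $|H_a(m,n)| = |a(m+n)|$ and hypothesis~\eqref{eq:supdouble} involves only $|a|$, entry-wise domination gives $\|H_a\| \le \|H_{|a|}\|$, so it suffices to treat $a \ge 0$. Then $H_a$ is symmetric, and the self-adjoint form of the Schur test reduces the problem to finding a positive weight $(x_m)$ and a constant $C$, depending only on the supremum $B$ in~\eqref{eq:supdouble}, with
\[
\sum_{n \ge 0} a(m+n)\, x_n \;\le\; C\, x_m \qquad (m \ge 0),
\]
from which $\|H_a\| \le C$ follows.

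The central difficulty is the choice of weight. The power weight $x_m = (m+1)^{-1/2}$, which is sharp for the Hilbert matrix $a(n) = 1/(n+1)$ (yielding the constant $\pi$), fails in general: for $a = 1_{\{m_0\}}$ it would force $C \ge \sqrt{m_0}$, even though the corresponding single anti-diagonal has norm $1$. Conversely, the constant weight $x_m \equiv 1$ handles that anti-diagonal but fails for the Hilbert matrix. So the weight must be intrinsic to $a$. A natural Littlewood--Paley-type candidate is
\[
x_m^2 \;=\; \sum_{l \ge 0} 2^{-l}\, c_l(m)^2, \qquad c_l(m) \;=\; \sum_{k \,\in\, [m+2^l,\, m+2^{l+1})} a(k).
\]
Applying~\eqref{eq:supdouble} at scale $M = 2^l$ shows $c_l(m) \le 2\sqrt{B}$, because the misaligned length-$2^l$ block overlaps at most two aligned blocks of the same length; hence $x_m$ is well-defined and uniformly bounded above.

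To verify the Schur inequality with such a weight, I would dyadically decompose $\sum_{k \ge m} a(k)\, x_{k-m}$ according to $k - m \in [2^l, 2^{l+1})$ and then apply Cauchy--Schwarz across $l$, invoking~\eqref{eq:supdouble} at scale $M = 2^l$ for each $l$ to control the resulting block sums. The main obstacle is closing this inequality self-consistently: the geometric factors $2^{-l/2}$ must combine with the block sums $c_l(m)$ to reassemble into the square root that defines $x_m$, with the final constant depending only on $B$. If the candidate weight above falls short, a variant---perhaps also involving block sums lying below $m$, or adjusting for the position of $m$ within its own dyadic block---should complete the argument. Identifying and verifying the correct weight is where the paper's ``variant of the Schur test'' earns its keep.
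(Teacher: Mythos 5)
Your proposal addresses a theorem that the paper itself does \emph{not} prove by any method. The paper merely records Theorem~\ref{th:fefferman} as a known result, cites proofs in~\cite{Bonsall, SlSt, SzW} that go through the equivalent statement on the circle and the $H^1$--$BMO$ duality, and then says that Bennett asked for an elementary proof and ``there must be one using the Schur test, but we have not found it.'' The ``variant of the Schur test'' you invoke at the end is actually the row/column factorization form, condition~\eqref{en:factor} of Lemma~\ref{th:Schur}, which the paper applies to Lemma~\ref{th:Kothe} under the \emph{stronger} hypothesis~\eqref{eq:sumsquaresum}; the paper is explicit that it has not managed to push such an argument to hypothesis~\eqref{eq:supdouble}.

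Your sketch, as you already sense, does not close, and the gap is not merely one of bookkeeping. With the weight $x_m^2 = \sum_{l\ge 0} 2^{-l}c_l(m)^2$, the uniform bound $c_l(m) \le 2\sqrt B$ that you derive forces $x_m \le 2\sqrt{2B}$ for every $m$. Substituting this crude upper bound for $x_{k-m}$ in your dyadic decomposition gives
\[
\sum_{k>m} a(k)\,x_{k-m} \;\le\; 2\sqrt{2B}\sum_{l\ge 0} c_l(m),
\]
and the Cauchy--Schwarz you propose across $l$ then reads
\[
\sum_{l\ge 0} c_l(m) \;=\; \sum_{l\ge 0} 2^{l/2}\cdot 2^{-l/2}c_l(m)
\;\le\; \left(\sum_{l\ge 0} 2^{l}\right)^{1/2} x_m,
\]
where the first factor diverges. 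Reweighting $2^{-l}$ to any $w_l$ with $\sum_l w_l^{-1}<\infty$ makes $\sum_l w_l c_l(m)^2$ potentially divergent, since~\eqref{eq:supdouble} does not force $c_l(m)\to 0$. A separate defect is that your $x_m$ need not be strictly positive: for $a = 1_{\{m_0\}}$ one has $x_{m_0}=0$ while $\sum_n a(m_0+n)\,x_n = x_0 > 0$, so the Schur inequality fails at $m=m_0$ for any constant. The deeper obstruction, though, is that your argument draws on~\eqref{eq:supdouble} only through the pointwise bound $c_l(m)\le 2\sqrt B$ and never exploits the $\ell^2$-in-$j$ structure of the hypothesis, whereas the Schur test is a pointwise inequality in $m$; it is not clear where that global information can enter. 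This is precisely the difficulty the paper flags as open, and it remains open in your proposal.
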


The proofs 
of this
in \cite{Bonsall, SlSt, SzW} all 
run via the
equivalent statement 
on~$(-\pi, \pi]$ 
and
the duality between~$H^1$ and~$BMO$.
In~\cite[pp.~423--424]{Ben},
Grahame Bennett
asked for an elementary proof. There must be one
using the Schur test, but we have not found it.

In
Sections~\ref{sec:Folding}--\ref{sec:best}, we sharpen Theorem~\ref{th:HankPaley} in the following way.
Denote
the operator norm of~$A_v$ by~$\|A_v\|_\infty$.
By uniform boundedness, 
that
theorem
is equivalent to the existence of a constant~$C_K$
for which
\begin{equation}
\label{eq:PaleyHankelInequality}
\|A_v\|_\infty \le C_K\|v\|_2
\quad\textnormal{for all~$\ell^2$ sequences~$v$.}
\end{equation}
We
use the Schur test and a pattern found in~\cite{FourPaley} to
recover the fact
that
if~$k_{j+1} > 2k_j$ for all~$j$, then the smallest such constant~$C_K$ is~$\sqrt 2$.

\section{Schur tests}\label{sec:Schur}

Let $F$ be the set of all finitely-supported sequences
in the unit ball of~$\ell^2$ of the nonnegative integers.
Given a semi-infinite matrix $A$, consider
the extended real number
\begin{equation}\label{eq:bilinear}
\|A\|_\infty = \sup\left\{
\left|\sum_{m=0}^\infty \sum_{n=0}^\infty
A(m,n)g(m)h(n)\right|: g, h \in F\right\}
\end{equation}
This gives the usual operator norm
when $A$ represents a bounded operator
on $\ell^2$, and it is equal to $\infty$ otherwise.

We study conditions on the size
of the entries in a Hankel matrix that make its operator
norm finite. In this analysis,
we may assume that the entries in the matrix
are nonnegative, since the supremum above
does not decrease if we 
replace
all entries
in the matrix $A$ and the sequences $g$ and $h$
by their absolute values.

\begin{lemma}\label{th:Schur}
The following conditions on a semi-infinite matrix $A$
and its 
adjoint~$A^*$ are equivalent
when their entries are all nonnegative.
\begin{enumerate}
\item\label{en:boundedness}
$A$ represents a bounded operator
on $\ell^2$ with norm at most $S$.
\item\label{en:factor}
For each number $T > S$ there are matrices $B$ and $C$
with nonnegative entries with the following properties.
\begin{enumerate}
\item\label{en:entries}
$A(m,n) = \sqrt{B(m,n)C(m,n)}$
for all $m$ and $n$.
\item\label{en:rows}
Each row of $B$ has $\ell^1$ norm at most $T$.
\item\label{en:columns}
Each column of $C$ has $\ell^1$ norm at most $T$.
\end{enumerate}
\item\label{en:Schur}
For each number $T > S$ there are
strictly 
positive vectors $u$ and~$w$
for which $Au \le Tw$ and $A^*w \le Tu$.
\end{enumerate}
\end{lemma}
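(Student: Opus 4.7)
The plan is to establish the cycle (3) $\Rightarrow$ (2) $\Rightarrow$ (1) $\Rightarrow$ (3). The first two links are short and standard; the Perron--Frobenius-type direction (1) $\Rightarrow$ (3) is the main obstacle.

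For (3) $\Rightarrow$ (2), given strictly positive Schur vectors $u$ and $w$, I would set
\[
B(m,n) = A(m,n)\,\frac{u(n)}{w(m)}, \qquad C(m,n) = A(m,n)\,\frac{w(m)}{u(n)}.
\]
These are nonnegative, their geometric mean is $A(m,n)$, the $m$-th row sum of $B$ equals $(Au)(m)/w(m) \le T$, and the $n$-th column sum of $C$ equals $(A^*w)(n)/u(n) \le T$. For (2) $\Rightarrow$ (1), restrict to nonnegative $g, h \in F$, as noted just before the lemma, and write
\[
\sum_{m,n} A(m,n)\,g(m)\,h(n) = \sum_{m,n} \bigl[\sqrt{B(m,n)}\,g(m)\bigr]\bigl[\sqrt{C(m,n)}\,h(n)\bigr].
\]
Cauchy--Schwarz in the double index $(m,n)$ bounds this by the product of $\bigl(\sum_{m,n} B(m,n)g(m)^2\bigr)^{1/2}$ and the analogous expression with $C$ and $h$. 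After swapping summations and inserting the row/column bounds, each factor is at most $T^{1/2}$, so the double sum is at most $T$. Letting $T \searrow S$ yields $\|A\|_\infty \le S$.

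For the hard direction (1) $\Rightarrow$ (3), I would handle the non-self-adjointness by passing to the block matrix
\[
N = \begin{pmatrix} 0 & A \\ A^* & 0 \end{pmatrix}
\]
on $\ell^2 \oplus \ell^2$. This is self-adjoint, has nonnegative entries, and satisfies $\|N\| = \|A\| \le S$. It therefore suffices to produce a strictly positive $z = (z_1, z_2)$ with $Nz \le Tz$, since this unpacks to $A z_2 \le T z_1$ and $A^* z_1 \le T z_2$, which is exactly the pair of inequalities required by (3). I would build $z$ by a geometric series: fix any strictly positive $v_0 \in \ell^2 \oplus \ell^2$, for instance $v_0(k) = 2^{-k}$ in each block, and set
\[
z = \sum_{k=0}^\infty \left(\frac{N}{T}\right)^{\!k} v_0.
\]
The series converges in $\ell^2 \oplus \ell^2$ because $\|N/T\| \le S/T < 1$. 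Each summand is coordinatewise nonnegative, so $z \ge v_0 > 0$, which is the delicate point and supplies the strict positivity demanded by (3). Finally $(N/T)\,z = z - v_0 \le z$, i.e.\ $N z \le T z$, as required.
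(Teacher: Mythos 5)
Your proof is correct, and the directions $\eqref{en:Schur}\Rightarrow\eqref{en:factor}$ and $\eqref{en:factor}\Rightarrow\eqref{en:boundedness}$ are essentially identical to the paper's: the same rank-one Schur-product factorization $B = A\star(u(n)/w(m))$, $C = A\star(w(m)/u(n))$ for the first, and the same Cauchy--Schwarz-then-swap argument for the second. Where you genuinely diverge is in $\eqref{en:boundedness}\Rightarrow\eqref{en:Schur}$. The paper first treats the case where $A$ is symmetric via the Neumann series $u = d + Ad/T + A^2d/T^2 + \cdots$ (its equation~\eqref{eq:geometric}); for general $A$ it then applies this to the symmetric matrices $A^*A$ and $AA^*$ to get $\hat u$ and $\check u$, and stitches them together by hand as $u = T\hat u + A^*\check u$, $w = A\hat u + T\check u$, with a short computation verifying $Au \le Tw$ and $A^*w \le Tu$. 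You instead pass to the self-adjoint dilation $N = \bigl(\begin{smallmatrix}0 & A\\ A^* & 0\end{smallmatrix}\bigr)$, note $\|N\| = \|A\|$ and that $N$ still has nonnegative entries, and run the one Neumann series $z = \sum_k (N/T)^k v_0$ directly; strict positivity comes for free from $z \ge v_0$, and the inequality $Nz \le Tz$ unpacks immediately into the two required Schur inequalities. Both routes are Neumann-series arguments at heart, but your block-matrix version collapses the paper's two-step case analysis and the ad hoc formulas for $u$ and $w$ into a single line, at the cost of introducing the direct sum $\ell^2\oplus\ell^2$. It is a clean alternative and requires no verification the paper doesn't already need.
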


Condition~\eqref{en:Schur}
is the frequently-used {\em Schur test.}
The less-known condition~\eqref{en:factor}
is easier to apply to  Paley multipliers.
We rewrite it using the mixed-norm notation
\[
\|B\|_{(1,\infty)}
= \sup_m \sum_{n=0}^\infty |B(m,n)|
\]
and the notation $B\star C$
for the Schur (or Hadamard) product matrix
with entries $B(m,n)C(m,n)$.
Then condition~\eqref{en:factor} requires that
\begin{equation}\label{eq:factors2}
A\star A = B\star C,
\quad
\|B\|_{(1,\infty)} \le T,
\quad
\|C^*\|_{(1,\infty)} \le T,
\end{equation}
and that 
the entries
in these matrices 
all
be
nonnegative.

The fact that condition \eqref{en:Schur}
implies condition \eqref{en:factor}
is implicit in 
proofs,
like the one in~\cite[Section 5.2]{Nik},
of the Schur test. So is
the fact 
that~\eqref{en:factor} implies \eqref{en:boundedness}.
The fact that
\eqref{en:boundedness}
implies
\eqref{en:Schur}
goes back to~\cite{Kar} or~\cite{Gag}.
Since we use ideas from 
a
proof of the lemma,
we
include 
that
proof.

\begin{remark}
For some authors~
\cite[pp. cii--cviii]{DK}, the~``Schur test''
requires
more, namely
that
conditions~\eqref{en:rows}
and~\eqref{en:columns} hold with~$B$ and~$C$ replaced by~$A$.
The former condition then implies that~$A$ acts boundedly on~$\ell^\infty$,
and the latter that~$A$ acts boundedly on~$\ell^1$.
One can then use Cauchy-Schwartz, or more sophisticated
methods, to get boundedness on~$\ell^2$.
\end{remark}

\begin{remark}
Sometimes, one can exhibit strictly positive vectors~$u$ and~$v$ for which condition~\eqref{en:Schur} holds
in the stronger form where~$Au = Sw$ and~$A^*w = Sv$. Then~$u$ is an eigenvector of~$A^*A$ with eigenvalue~$S^2$, so 
that~$\|A\|_\infty \ge S$.
Since 
the Schur test makes~$S$ an upper bound
for that norm,~$\|A\|_\infty = S$ in such a case.
Moreover,
if~$A$ is symmetric then~$u + w$ is an eigenvector of~$A$ with eigenvalue~$S$.
We exploit this possibility in Section~\ref{sec:best}. 
\end{remark}

\section{
A proof of equivalence}
\label{sec:proof}
 
When condition~\eqref{en:Schur}) in Lemma~\ref{th:Schur} holds, let
\begin{equation}\label{eq:RankOne}
B = A\star\left(\frac{u(n)}{w(m)}\right)_{m, n = 0}^\infty,
\quad\text{and}\quad
C = A\star\left(\frac{w(m)}{u(n)}\right)_{m, n = 0}^\infty
\end{equation}
respectively. Then~$B\star C = A\star A$.
For this choice of~$B$ and~$C$, 
conditions~\eqref{en:rows} and~\eqref{en:columns}
are 
equivalent to requiring that~$Au \le Tw$
and $A^*w \le Tu$. 

Suppose 
now
that condition \eqref{en:factor} holds in any way.
Apply the Cauchy-Schwarz inequality to the sum
\[
\sum_{m=0}^\infty \sum_{n=0}^\infty
A(m,n)g(m)h(n)
= \sum_{m=0}^\infty \sum_{n=0}^\infty
\sqrt{B(m, n)}g(m)\sqrt{C(m, n)}h(n)
\]
to get the upper bound
\[
\left[\sum_{m=0}^\infty \sum_{n=0}^\infty
{B(m, n)}g(m)^2\right]^{1/2}
\left[\sum_{m=0}^\infty \sum_{n=0}^\infty
{C(m, n)}h(n)^2\right]^{1/2}.
\]
By condition \eqref{en:rows}, the inner
sum in the first factor above
is no larger than~$\{[Tg](m)\}^2$.
Since$\|g\|_2 \le 1$, the first factor
is at most $\sqrt{T}$.
Reversing the order of summation in the second factor
and using condition~\eqref{en:columns} gives
the same upper bound for that factor
when $\|h\|_2 \le 1$.
The matrix~$A$ therefore represents
a bounded operator on $\ell^2$ with norm at most $T$.
Use this for all numbers $T > S$
to get that $\|A\|_\infty \le S$, and that $\eqref{en:factor}
\implies \eqref{en:boundedness}$.

Finally, assume that condition \eqref{en:boundedness}
holds, and fix $T > S$. Suppose initially that $A$
is symmetric. Given any strictly
positive sequence~$d$ in~$\ell^2$, let $u$
be the sum of the convergent series
\begin{equation}\label{eq:geometric}
d + \frac{Ad}{T}+ \frac{A^2d}{T^2}
+ \dots
\end{equation}
Then $Au \le Tu$, and condition \eqref{en:Schur} holds
with $w = u$.

When $A$ is not symmetric,
apply the reasoning above to~$A^*A$
and to~$AA^*$, both with norms $S^2$,
to get strictly positive vectors $\hat u$
and $\check u$ 
for which
\[
A^*A\hat u \le T^2\hat u,
\text{\ and\ }
AA^*\check u \le T^2\check u.
\]
Then let
\begin{equation}
u = T\hat u + {A^*\check u},
\text{\ and\ }
w = {A\hat u} + T\check u,
\end{equation}
and check
that
$Au \le Tw$
and $A^*w \le Tu$.
So $\eqref{en:boundedness} \implies
\eqref{en:Schur}$.
\qed

\begin{remark}
\label{rm:dichotomies} 
 
There are several dichotomies here.
First,
the proof above shows that 
if condition~\eqref{en:factor}
holds,
then it must hold with
factors~$B$ and~$C$ equal to Schur products of~$A$
with matrices of rank one 
in which
all entries are positive.

Next, when~$A$ is symmetric and acts boundedely on~$\ell^2$,
the proof yields conditions~\eqref{en:Schur}
and~\eqref{en:factor} with~$u=w$ and~$C = B^*$.
But if condition~\eqref{en:Schur}
or~\eqref{en:factor} is satisfied in any way
for a symmetric matrix with nonnegative entries,
then boundedness follows, and both conditions
can be satisfied symmetrically.
This also follows by simply replacing the strictly positive vectors~$u$ and~$w$ that satisfy condition~\eqref{en:Schur} with~$u+w$.

Finally, the proof that $\eqref{en:boundedness}
\implies \eqref{en:Schur}$ yields strictly positive vectors
in~$\ell^2$ that satisfy condition \eqref{en:Schur},
while the proof that $\eqref{en:Schur}
\implies \eqref{en:boundedness}$
only requires that~$u$ and  $w$ be strictly positive,
without necessarily belonging to $\ell^2$.
This occurs in~\cite{KO} and~\cite{Bor},
where the sequence~$(1/\sqrt{n+1})$
is used
to prove one of the
Hilbert inequalities
with a best constant.
Some of the sequences that we use in Section~\ref{sec:best}
also do not belong to~$\ell^2$

\end{remark}

\section{Proving Paley}
\label{sec:Paley}

Since sums of bounded operators are bounded, the conclusion of Theorem~\ref{th:HankPaley} also holds for sets~$K$ that are unions of finitely many Hadamard sets.
Similarly, it suffices to prove
the theorem
for sets~$K$ that are \emph{strongly lacunary} in the sense that
\begin{equation}
\label{eq:strong}
k_{j+1} > 2k_j
\quad\text{for all }j,
\end{equation}
because each Hadamard set is a union of finitely many strongly lacunary sets.

We 
offer
three
answers to the question
``What 
do
Schur-test 
proofs
of Paley's theorem
look like?''
In this section,
we
specify a simple factorization
that satisfies condition~\eqref{en:factor}
in Lemma~\ref{th:Schur}
whenever $K$ is a union
of finitely-many Hadamard sets.
In Section~\ref{sec:Folding},
we
use 
that factorization 
when~$K$ is strongly lacunary
to get
a 
strictly positive 
vector $u$
that satisfies the symmetric version
\begin{equation}
\label{eq:SymmetricSchur}
A_v u \le Tu
\end{equation}
of condition~\eqref{en:Schur}
in the lemma.
We discuss the pattern in this example in Section~\ref{sec:Patterns}.
In Section~\ref{sec:best},
we 
modify 
the
example to recover the best constant
in the inequality~$\|A_v\|_\infty \le C_K\|v\|_2$
when~$K$ is strongly lacunary.

A set is a union of finitely many Hadamard sets
if and only
there is a constant $M$
so that there are at most $M$ 
members of that set
in each 
interval $[m, 2m)$.
As in~\cite{RudPal}, this
property also characterizes
sets~$K$
for which the conclusion of Theorem~\ref{th:Paley} holds.
Similar reasoning applies to Theorem~\ref{th:HankPaley}.

For any such set~$K$,
get~$B$ and~$C$ from~$A_v$ as follows.
Let~$B$ and~$C$ both match~$A_v$ on the main diagonal.
Let~$B$ match~$A_v\star A_v$ above that diagonal.
In that region, let~$C(m,n) = 1$ on the special antidiagonals where~$m+n = k_j$ for some~$j$,
and let~$C(m,n) =0$ otherwise.
Reverse the r\^ oles of~$B$ and~$C$ below the main diagonal.
Then $B\star C = A_v\star A_v$, and~$C = B^*$.

Since
$
\|C^*\|_{1,\infty} = \|B\|_{1,\infty},
$
it suffices to show that either mixed norm is finite.
In each row sum in~$B$,
the diagonal term is at most $\|v\|_2$, while the sum
of the terms
to the right of it is at most~$(\|v\|_2)^2$.
The sum 
of the terms
to the left of the diagonal
is the number of~$1$'s below the diagonal
in that row of the matrix $B$. In the $m$-th row,
this is the number of indices $k_j$
in the interval $[m, 2m)$.
Therefore
\begin{equation}
\label{eq:MixedUpperBound}
 \sum_{n=0}^\infty B(m,n)
 \le M + \|v\|_2 + (\|v\|_2)^2. \qquad\qed
\end{equation}




\section{Paley multipliers}
\label{sec:multipliers}

\begin{proof}[Proof of Lemma~\ref{th:Kothe}]

The same arguments giving the equivalence of Theorems~\ref{th:Paley}
and~\ref{th:HankPaley} show that Lemma~\ref{th:Kothe}
is equivalent to the statement that if a sequence~$a$ satisfies
condition~\eqref{eq:sumsquaresum}, then the Hankel matrix~$H_a$
acts boundedly on~$\ell^2$.
It suffices to prove that boundedness for strictly positive sequences~$a$.

In that case, apply
condition~\eqref{en:factor} 
in Lemma~\ref{th:Schur}
as follows.
Let~$B$ match~$H_a$ along the main diagonal.
Above that diagonal, get the entries in~$B$ by multipling the entries
in each column of~$H_a$ by the sum  along that column above the diagonal.
In the~$n$-th column, for instance,
multiply above the diagonal by
\[
\sum_{0 \le m < n} H_a(m,n)
= \sum_{n \le r < 2n} a(r).
\]
To the left of the diagonal,
divide the entries in the~$n$-th row of~$H_a$ by the same 
positive
sum.
Let~$C = B^*$, and check that~$H_a\star H_a = B\star C$.

Split
the~$\ell^1$ norm of the~$n$-th row of~$B$ 
into
the part before the diagonal, the diagonal term,
and the part after the diagonal. The division above
makes the the 
norm of the
first part equal to~$1$. The diagonal term
is no larger than~$\|a\|_2$. The sum beyond the diagonal
is equal to
\[
\sum_{s > n} H_a(n, s)
\left[\sum_{s \le r < 2s} a(r)\right]
= \sum_{s > n} a(n+s)
\left[\sum_{s \le r < 2s} a(r)\right].
\]
In the inner sum above, the index~$r$ is always greater than~$(n+s)/2$
and smaller than~$2(n+s)$. So the double sum is majorized by
\[
\sum_{i=1}^\infty a(i)
\left[\sum_{i/2 < r < 2i} a(r)\right],
\]
which does not depend on~$n$. To
bound
this expression,
write it dyadically as
\begin{equation}
\label{eq:expression}
\sum_{j\ge0}
\left\{
\sum_{2^j \le i < 2^{j+1}} a(i)
\left[\sum_{i/2 < r < 2i} a(r)\right].
\right\}
\end{equation}
Here the index~$r$ in the innermost sum
is always greater than~$2^{j-1}$
and smaller than~$2^{j+2}$. So the 
quantity~\eqref{eq:expression}
is majorized by
\[
\sum_{j\ge0}
\left\{
\sum_{2^{j-1} < i < 2^{j+2}} a(i)
\right\}^2,
\]
which is finite if condition~\eqref{eq:sumsquaresum} holds.
That gives the desired uniform bound on the~$\ell^1$ norms of the rows
of~$B$.
\end{proof}

\begin{proof}[Proof of Theorem~\ref{th:Paley2}]
Multiplying coefficients of~$H^1$ functions by a sequence~$b$
always gives an~$\ell^2$ sequence
if and only if the product sequences~$bc$
with~$\ell^2$ sequences~$c$ are always multipliers
from~$\widehat{H^1}$ to~$\ell^1$. By Lemma~\ref{th:Kothe},
the product sequences have this property if
\[
\sum_{j\ge0}
\left[
\sum_{2^j \le n < 2^{j+1}} |b(n)||c(n)|
\right]^2 < \infty.
\]
If condition~\eqref{eq:supsum2}
holds with the sequence~$a$ replaced by~$b$,
then the Cauchy-Schwartz inequality yields
the condition above for every~$\ell^2$ sequence~$c$.
\end{proof}

\section{
Folding patterns}
\label{sec:Folding}

When~$K$ is strongly lacunary,
and~$v$ is strictly positive,
ask 
that
the matrix $B$
in Section~\ref{sec:Paley}
satisfy the 
version of
equation~\eqref{eq:RankOne} with~$w = u$,
that is
\begin{equation}
\label{eq:symmetric}
B = A_v\star\left(\frac{u(n)}{u(m)}\right)_{m,n = 0}^\infty.
\end{equation}
Unless~$m+n = k_j$ for some~$j$, the entries~$B(m, n)$ and~$A_v(m, n)$ both vanish, and
condition~\eqref{eq:symmetric} then places no restriction on the nonzero numbers~$u(n)$ and~$u(m)$.
There is also no restriction
if~$m = n = k_j/2$,
when that is an integer,
since~$B$ and~$A_v$ agree along the main diagonal.

In the remaining cases,~$A(m, n) = v_j$,
while~$B(m, n) = 1$ if~$(m, n)$ lies below the main diagonal, and~$B(m, n) = v_j^2$ above that diagonal.
In the former
case, equation~\eqref{eq:symmetric} yields
that~$u(m) = v_iu(n)$,
that
is
\begin{equation}
\label{eq:solved}
 u(k_j-n) = v_ju(n)
 \quad\text{when~$0 \le n < k_j/2.$}
 \end{equation}
Applying condition~\eqref{eq:symmetric} above the main diagonal gives the equivalent conclusion
with~$m$ and~$n$ interchanged.

Rescale the unknown positive vector $u$ to make $u(0) = 1$.
Formula~\eqref{eq:solved} remains valid.
Apply it
when~$n=0$
to get that
\[
u(k_j) = v_ju(0) = v_j
\quad\text{for all}\quad j.
\]
By 
strong lacunarity,~$k_i < k_j/2$
when~$i < j$. 
By formula~\eqref{eq:solved},
\[
u(k_j-k_i) = v_ju(k_i) = v_jv_i
\quad\text{for all}\quad i < j.
\]
The fact that~$k_i - k_h < k_j/2$ when~$h < i < j$
then gives that
\[
u(k_j - k_i + k_h) = v_jv_iv_h
\quad\text{when}\quad
h < i < j.
\]
Continue in this way.

Strong lacunarity
implies that each nonnegative integer,~$k$ say,
has at most one 
representation
as 
a,
possibly empty,
sum
of the form
\begin{equation}\label{eq:alternating}
k = k_{j_1} - k_{j_2} + k_{j_3} - \cdots \pm k_{j_r}
\quad\text{where}\quad
j_1 > j_2 > j_3 > \cdots.
\end{equation}
When~$k$ has that form, the analysis above shows that
\begin{equation}\label{eq:product}
u(k) = v_{j_1}v_{j_2}v_{j_3}\cdots v_{j_r}.
\end{equation}
Denote the set of integers with
an alternating representation~\eqref{eq:alternating}
by~$\Fold(K)$.
If~$k_j = 2^j - 1$ for all~$j$,
then that set consists of all nonnegative integers.
In that case,
formula~\eqref{eq:product},
with the usual convention that the empty product is~$1$,
completely determines the sequence~$u$.
Moreover,~$u$
is 
strictly positive since~$v$ is. It suffices to prove
the theorem for such~$v$'s.

When~$k_0 > 0$, or~$k_{j+1} > 2k_j + 1$
for some value(s) of~$j$, proceed as follows.
If~$0 \notin K$, augment~$K$ with~$0$, and then reindex~$K$.
For each nonnegative integer~$j$,
let~$L_j$ and~$R_j$ be the sets of integers
in the intervals~$[0, k_j]$ and~$[k_{j+1} - k_j, k_{j+1}]$
respectively. 
Strong lacunarity
makes~$L_j$ and~$R_j$ disjoint. They cover~$L_{j+1}$
if and only if~$k_{j+1} = 2k_j + 1$.

\begin{algorithm}\label{th:fold}
Start with~$u(0) = 1$,
and define~$u$ iteratively by imposing the following conditions
in the set~$L_{j+1}$.
\begin{enumerate}
\item{}\label{en:coefficients}
The values of~$u$ on~$R_j$
are just those on~$L_j$ listed
in reverse order and multiplied by~$v_{j+1}$.
\item{}\label{en:positive}
At integers, if any, in the 
first half~$(k_j, k_{j+1}/2]$ of the
gap between~$L_j$ and~$R_j$
let~$u$ take any 
positive 
values.
\item{}\label{en:SecondHalf}
In the part of that gap strictly the right of the midpoint~$k_{j+1}/2$, use the 
values from the part of the gap strictly the left of~$k_{j+1}/2$ listed in reverse order and multiplied by~$v_{j+1}$.
\end{enumerate}
\end{algorithm}
Then~$u$
satisfies condition~\eqref{eq:symmetric},
and
equation~\eqref{eq:product} holds
on the set~$\Fold(K)$.

\begin{remark}
\label{rm:simpler}
A simpler way to satisfy condition~\eqref{en:Schur}
in Lemma~\ref{th:Schur}
is to
allow~$u$ to take any fixed positive constant value
in each gap,
rather than satisfying condition~\eqref{en:SecondHalf} above.
The corresponding matrix~$B$ then differs from the one used in Section~\ref{sec:Paley}.
\end{remark}

\section{Related Patterns}
\label{sec:Patterns}

Patterns with folding properties similar
to those in Algorithm~\ref{th:fold}
have occurred in settings
where the sequence~$u$ does not have
to be nonzero or even real-valued. In those cases, let~$u$ vanish
in any gaps between the sets~$L_j$ and~$R_j$,
instead of being positive there as in 
the algorithm.
Formula~\eqref{eq:product} still gives the values of~$u$
on~$\Fold(K)$,
and~$u$ vanishes 
off that set.


For any sequence~$(v_j)_{j \ge 0}$
of complex numbers,
define
functions~$U_e^{(j)}$
and~$U_o^{(j)}$ on the interval~$(-\pi, \pi]$ as follows.
\begin{algorithm}\label{th:refold}
Let~$U_e^{(0)} = 1$ and~$U_o^{(0)} = 0$.
Given~$U_e^{(j-1)}$ and~$U_o^{(j-1)}$, let
\begin{align}
\label{eq:evenU's}
U_e^{(j)}(t) &= U_e^{(j-1)}(t) + {
\overline{v_{j}\exp({ik_{j}t})}}
{U_o^{(j-1)}}(t),\\
\textnormal{and}\qquad
U_o^{(j)}(t) &= U_o^{(j-1)}(t) + {v_{j}\exp({ik_{j}t})}
{U_e^{(j-1)}}(t).
\end{align}
\end{algorithm}

These 
trigonometric
polynomials connect with Theorems~\ref{th:HankPaley} and~\ref{th:Paley}
in two ways. 
First, 
rational functions
of the form~$U_o^{(j)}/U_e^{(j)}$,
with~$v$ replaced by a related sequence
obtained via the Schur \emph{algorithm} rather than the Schur test,
were used in~\cite{FourPaley} to prove Theorem~\ref{th:Paley}
and 
to
discover
a
useful
extension of it.
Second,
for a nonnegative sequence~$v$, the coefficients of the function 
\[
t \mapsto U_e^{(j)}(-t) + U_o^{(j)}(t)
\]
match the sequence~$u$
on~$\Fold(K)\cap L_j$,
and they vanish otherwise.
As
noted in~\cite{FourPaley}, a very similar folding pattern for coefficients occurs in the modification of the Rudin-Shapiro polynomials used in~\cite{ClunieRS} and~\cite{FournierRS};
the only difference 
is that
the plus sign in equation~\eqref{eq:evenU's} is replaced
with a minus sign.



\begin{remark}
If~$k_j = 2^j - 1$ for all~$j$, then~$u$
can be represented by a product
that takes the place of the series~\eqref{eq:geometric}
that was
used in the proof of Lemma~\ref{th:Schur}.
Split the matrix~$A_v$ as a formal sum
\begin{equation}
\label{eq:split}
A_v = \sum_{j=1}^\infty A_v^{(j)},
\end{equation}
where~$A_v^{(j)}$ matches~$A_v$ on the antidiagonal where~$m+n = k_j$,
and vanishes elsewhere. Let~$u^{(J)}$ be
the sequence obtained by stopping
Algorithm~\ref{th:fold} when~$j=J$.
Let~$e^{(0)}$
be the transpose of~$(1, 0, 0, 0, \cdots)$.
Then
\begin{equation}\label{eq:partialproduct}
u^{(J)} = \left[I + A_v^{(J)}\right]
\left[I + A_v^{(J-1)}\right]
\left[\cdots\right]
\left[I + A_v^{(1)}\right]
e^{(0)}.
\end{equation}
Expand this
as 

\begin{equation}\label{eq:partproducts}
\left(
I + \sum_{j=1}^J\left\{
A_v^{(j)}
\left[I + A_v^{(j-1)}\right]
\left[\cdots\right]
\left[I + A_v^{(1)}\right]\right\}\right)
e^{(0)}.
\end{equation}
The~$j$-th matrix summand above times~$e^{(0)}$
vanishes off the set~$R_j$.
Since these sets are disjoint,~$u$ is equal to the infinite product
\[
\left[\cdots\right]\left[I + A_v^{(J)}\right]
\left[I + A_v^{(J-1)}\right]
\left[\cdots\right]
\left[I + A_v^{(1)}\right]
e^{(0)}.
\]

\end{remark}

\section{Recovering the best constant}
\label{sec:best}

Continue to assume
that~$K$ is strictly lacunary
and contains~$0$.
In proving
that
\begin{equation}
\label{eq:specialbest}
\|A_v\|_\infty \le \sqrt2\|v\|_2
\end{equation}
in that case,
it is enough to consider nontrivial 
nonnegative
sequences~$v$ with finite support, rescale them so that~$\|v\|_2 = 1/\sqrt2$,
and show that~$\|A_v\|_\infty \le 1$.

We
will
apply Algorithm~\ref{th:fold}
to a sequence~$c$ obtained from such a sequence~$v$,
and use the Schur test to confirm that~$\|A_v\|_\infty \le 1$.
We will use the
inverse process going from~$c$ to~$v$
to exhibit
cases where~$\|A_v\|_\infty = 1$ while~$v$ exceeds~$1/\sqrt2$ by as little as we like.
So
the constant~$\sqrt2$
in inequality~\eqref{eq:specialbest}
is best possible.

The inverse process is easier to describe.
As in~\cite{FourPaley}, given~$(c_j)_{j=0}^\infty$,
form sequences~$v^{(J)}(c)$ as follows.
Let~$v^{(0)}(c)$ be the transpose of~$(c_0, 0, 0, \cdots)$.
Given~$v^{(J-1)}(c)$ pass to~$v^{(J)}(c)$ by 
setting the~$J$-th component of~$v^{(J)}(c)$
equal to~$c_{J}$, and multiplying
all earlier entries in~$v^{(J-1)}(c)$ by~$(1 - |c_{J}|^2)$.
%

Inequality~\eqref{eq:specialbest} and the fact that~$\sqrt2$ is the best constant there follow easily from the two lemmas below.
The first 
one
is equivalent, in the usual way, to  previous
results~\cite{FourPaley}
about functions on~$(-\pi, \pi]$.
Here, we prove it more directly using the Schur test.
For completeness, we also include a 
modified
proof of the second lemma, which is essentially in~\cite{FourPaley}.
\begin{lemma}
\label{th:HankelSchur}
If~$|c_j| \le 1$ for all~$j \le J$, 
then~$\left\|A_{v^{(J)}(c)}\right\|_\infty \le 1$.
If~$c_j$ is real for all~$j \le J$, and~$c_0 = 1$,
then~$\left\|A_{v^{(J)}(c)}\right\|_\infty = 1$.
\end{lemma}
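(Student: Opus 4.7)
The plan is to prove the upper bound~$\|A_{v^{(J)}(c)}\|_\infty \le 1$ by the symmetric Schur test (condition~\eqref{en:Schur} of Lemma~\ref{th:Schur} with~$u = w$) using the vector produced by Algorithm~\ref{th:fold}, and then to prove the lower bound~$\|A_v\|_\infty \ge 1$ (under~$c_0 = 1$ and real~$c_j$) by restricting that same vector to~$\Fold(K) \cap L_J$ and verifying that the result is an honest~$\ell^2$-eigenvector with eigenvalue~$1$. Both parts run by induction on~$J$, driven by the recursion
\[
A_{v^{(J)}} = (1 - |c_J|^2)\, A_{v^{(J-1)}} + c_J\, M_J, \qquad M_J(m,n) := \mathbf{1}[m + n = k_J],
\]
combined with the folding identity~\eqref{eq:solved} satisfied by~$u$.

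For the upper bound, first reduce to nonnegative parameters: since $|v^{(J)}(c)| = v^{(J)}(|c|)$ entrywise, we have $\|A_{v^{(J)}(c)}\|_\infty \le \|A_{v^{(J)}(|c|)}\|_\infty$, and by continuity we may assume~$c_j > 0$ for every~$j \le J$. Let~$u$ be the strictly positive output of Algorithm~\ref{th:fold} applied to~$c$. To verify~$A_{v^{(J)}} u \le u$ inductively, split into the cases~$m \in L_{J-1}$,~$m \in R_{J-1}$,~$m$ in the gap, and~$m > k_J$. For~$m \in L_{J-1}$ the first summand of the recursion contributes~$(1 - c_J^2)(A_{v^{(J-1)}} u^{(J-1)})(m) \le (1 - c_J^2)\, u(m)$ by induction (noting that $u$ agrees with~$u^{(J-1)}$ on~$L_{J-1}$), and the second contributes $c_J\, u(k_J - m) = c_J^2\, u(m)$ via~\eqref{eq:solved}; these sum to at most~$u(m)$. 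For~$m \in R_{J-1}$ only the~$j = J$ term survives and gives exact equality; for~$m$ in the gap the same analysis gives~$(A_v u)(m) \le c_J^2 u(m) \le u(m)$; and for~$m > k_J$ the sum is empty. The Schur test then closes the upper bound.

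For the lower bound, define~$g_J(n) := u(n)$ when $n \in \Fold(K) \cap L_J$ and $g_J(n) := 0$ otherwise; this vector has finite support and $g_J(0) = 1$. The plan is to show $A_{v^{(J)}} g_J = g_J$ by induction. Write $g_J = g_{J-1} + c_J R_J g_{J-1}$, where~$(R_J h)(n) := h(k_J - n)\, \mathbf{1}[n \in R_{J-1}]$ is the reflection across the antidiagonal~$m + n = k_J$. Brief calculations yield~$M_J g_{J-1} = R_J g_{J-1}$, $M_J R_J g_{J-1} = g_{J-1}$, and, crucially,~$A_{v^{(J-1)}} R_J g_{J-1} = 0$: the nonzero entries of~$A_{v^{(J-1)}}$ lie on antidiagonals~$m + n \le k_{J-1}$, while $R_J g_{J-1}$ is supported on~$R_{J-1} = [k_J - k_{J-1}, k_J]$, and strong lacunarity~$k_J > 2k_{J-1}$ makes these incompatible. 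Expanding $A_{v^{(J)}} g_J = (1 - c_J^2) A_{v^{(J-1)}} g_J + c_J M_J g_J$ and invoking the inductive hypothesis $A_{v^{(J-1)}} g_{J-1} = g_{J-1}$ then collapses the right side to $g_J$, exhibiting $g_J$ as an $\ell^2$-eigenvector with eigenvalue~$1$ and forcing~$\|A_v\|_\infty \ge 1$.

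The principal obstacle is the support bookkeeping in the inductive step: one must track at every stage that~$g_{J-1}$ lives in~$L_{J-1}$, that~$R_J g_{J-1}$ lives in~$R_{J-1}$, and (by strong lacunarity) that~$A_{v^{(J-1)}}$ is too short-range to reach anything supported on~$R_{J-1}$. This separation is precisely what makes Algorithm~\ref{th:fold}'s folding pattern compatible with the Schur-algorithm recursion for~$v^{(J)}(c)$, so that both the Schur-test inequality for the upper bound and the exact eigenvector identity for the lower bound propagate cleanly through the induction.
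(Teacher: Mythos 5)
Your proposal is correct and follows essentially the same route as the paper: the same reduction to $0<c_j<1$, the same Schur-test vector $u$ from Algorithm~\ref{th:fold}, the same recursion $A_{v^{(J)}(c)}=(1-c_J^2)A_{v^{(J-1)}(c)}+c_JP_J$ driving an induction on $J$, and the same eigenvector $u^{(J)}$ (your $g_J$) obtained by restricting $u$ to $\Fold(K)\cap L_J$ for the equality part. The only distinction is cosmetic: where the paper verifies the eigenvector identity by a case split over $L_{J-1}$, $R_{J-1}$, the gap, and $m>k_J$, you package the same computation into the operator identities $M_Jg_{J-1}=R_Jg_{J-1}$, $M_JR_Jg_{J-1}=g_{J-1}$, and $A_{v^{(J-1)}(c)}R_Jg_{J-1}=0$, which is a tidy but equivalent bookkeeping of the same support and reflection facts.
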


\begin{lemma}
\label{th:Range}
If~$\|v\|_2 \le 1/\sqrt2$ and~$v_j = 0$ for all~$j > J$,
then~$v = v^{(J)}(c)$ for a sequence~$c$ with the property that~$0 \le c_j \le 1/\sqrt2$ for all~$j$.
On the other hand,~$\|v^{(J)}(c)\|_2 =
\sqrt{(J+2)/(2J+2)}$
if~$c_j = 1/\sqrt{j+1}$ for all~$j$.
\end{lemma}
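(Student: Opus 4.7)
The plan is to prove both parts by induction on~$J$. For the first assertion, I would invert one step of the forward process defining~$v^{(J)}(c)$. Given a nonnegative sequence~$v$ with~$\|v\|_2 \le 1/\sqrt 2$ and~$v_j = 0$ for~$j > J$, set~$c_J = v_J$; since~$c_J^2 \le \|v\|_2^2 \le 1/2$, this forces~$0 \le c_J \le 1/\sqrt 2$ and~$1 - c_J^2 > 0$. Then define~$v^{(J-1)}$ by~$v^{(J-1)}_j = v_j/(1-c_J^2)$ for~$j < J$ and~$v^{(J-1)}_j = 0$ otherwise, so that applying the defining step to~$v^{(J-1)}$ with parameter~$c_J$ returns~$v$.

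To iterate the inversion, I need~$\|v^{(J-1)}\|_2 \le 1/\sqrt 2$. A short direct computation gives
\[
\|v^{(J-1)}\|_2^2 = \frac{\|v\|_2^2 - c_J^2}{(1-c_J^2)^2},
\]
and the required bound~$\le 1/2$ is equivalent to~$2\|v\|_2^2 \le 1 + c_J^4$, which holds immediately because~$\|v\|_2^2 \le 1/2$. Iterating down to~$J=0$ produces~$c_0,\ldots,c_J$ in~$[0,1/\sqrt 2]$; extending by~$c_j=0$ for~$j > J$ finishes the first part.

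For the second assertion, let~$N_J = \|v^{(J)}(c)\|_2^2$. The construction of~$v^{(J)}(c)$ gives the recurrence~$N_J = c_J^2 + (1-c_J^2)^2 N_{J-1}$, which with~$c_J = 1/\sqrt{J+1}$ becomes
\[
N_J = \frac{1}{J+1} + \left(\frac{J}{J+1}\right)^2 N_{J-1}
\]
with base value~$N_0 = 1$. A short induction then confirms~$N_J = (J+2)/(2J+2)$. The only delicate point in the whole argument is the elementary identity~$2\|v\|_2^2 \le 1 + c_J^4$ that keeps each reduced sequence inside the ball of radius~$1/\sqrt 2$; everything else is bookkeeping.
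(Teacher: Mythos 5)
Your proof is correct and follows essentially the same route as the paper's: induction on~$J$, inverting the single step from~$v^{(J-1)}$ to~$v^{(J)}$, and the one-step norm recurrence~$\|v\|_2^2 = c_J^2 + (1-c_J^2)^2\|v^{(J-1)}\|_2^2$. The paper phrases the key algebra in terms of~$\varepsilon = \|v\|_2^2 - \tfrac12$, deriving~$\varepsilon = \tfrac12|v_J|^4 + (1-|v_J|^2)^2\varepsilon'$, which is precisely your inequality~$2\|v\|_2^2 \le 1 + c_J^4$ in disguise; otherwise the two arguments are the same.
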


\begin{proof}[Proof of Lemma~\ref{th:Range}]
Both parts are clear when~$J=0$.
Let~$J > 0$, and assume that both hold when~$J$ is replaced by~$J-1$.
Given a sequence~$v$
for which~$|v_j| < 1$ for all~$j > 0$
and~$v_j = 0$ for all~$j > J$,
replace~$v_J$ by~$0$ and divide
all earlier entries in~$v$ by~$(1 - |v_J|^2)$
to get a sequence~$v'$.

Let~$\varepsilon = \left\|v\right\|^2 - 1/2$
and~$\varepsilon' = \left\|v'\right\|^2 - 1/2.$
Then
\[
\varepsilon =
\left\{\left(1 - |v_J|^2\right)^2\left[\frac{1}{2} + \varepsilon'\right]
+ |v_J|^2\right\} - \frac{1}{2}.
\]
Expand the product~$\left(1 - |v_J||^2\right)^2(1/2)$ and simplify to get that
\begin{equation}
\label{eq:NextEpsilon}
\varepsilon = \frac{1}{2}|v_J|^4
+ \left(1 - |v_J|^2\right)^2\varepsilon'.
\end{equation}

It follows that if~$\varepsilon' >0$,
then~$\varepsilon > 0$, contrary to the assumption 
in the first part of the lemma
that~$\|v\|_2 \le 1/\sqrt2$.
Therefore,~$\varepsilon' \le 0$, and~$\|v'\|_2 \le 1/\sqrt2$ 
in that part.
By the inductive assumption~$v'$ is equal to
$v^{(J-1)}(c)$
for a sequence with the property that~$|c_j| \le 1/\sqrt2$ for all~$j$.
Replacing~$c_J$ by~$v_J$ then makes~$v = v^{(J)}(c)$.

In the second part of the lemma, apply
formula~\eqref{eq:NextEpsilon}
when~$v = v^{(J)}(c)$ and~$v' = v^{(J-1)}(c)$. By the inductive assumption,~$\varepsilon'
= 1/(2J)$. Hence
\begin{gather*}
 \varepsilon = \frac{1}{2(J+1)^2}
 + \left[1 - \frac{1}{J+1}\right]^2\frac{1}{(2J)}\\
 = \frac{1}{2(J+1)^2}
 + \left[\frac{J}{J+1}\right]^2\frac{1}{(2J)}
 = \frac{1}{2(J+1)}.
\qedhere
\end{gather*}
\end{proof}

\begin{proof}[Proof of Lemma~\ref{th:HankelSchur}]
In the first part of the lemma, replacing~$c$ by~$|c|$ replaces~$v^{(J)}(c)$ by~$\left|v^{(J)}(c)\right|$, and does not decrease~$\left\|A_{v^{(J)}(c)}\right\|_\infty$.
Small enough changes in~$(c_0, \cdots c_J)$
change~$\left\|A_{v^{(J)}(c)}\right\|_\infty$
by as little as we like, and
that norm
is not affected by changes in~$c_j$
when~$j > J$. So we may assume
that~$0 < c_j < 1$ for all~$j$.

Build a strictly positive sequence~$u$ by using Algorithm~\ref{th:fold}
with the sequence~$v$ replaced by~$c$. Since the matrix entries~$A_{v^{(J)}(c)}(m, n)$ vanish when~$m + n > k_J$, each product vector~$A_{v^{(J)}(c)}u$ is finite;
moreover~$\left[A_{v^{(J)}(c)}u\right](m) = 0$ 
for all~$m > k_J$.

The desired upper bound on~$\left\|A_{v^{(J)}(c)}\right\|_\infty$ follows
by the Schur test
if
\begin{equation}
 \label{eq:reduced}
 A_{v^{(J)}(c)}u \le u.
\end{equation}
Since~$|v_0| \le 1$,
inequality~\eqref{eq:reduced}
is clear
when~$J=0$. Assume that it holds when some positive value~$J$ is replaced by~$J-1$.

Let~$P_J$ be the Hankel matrix with entries equal to~$1$
on the antidiagonal where~$m + n = k_J$ and to~$0$
otherwise. Then
\[
 A_{v^{(J)}(c)} = (1 - c_J^2)A_{v^{(J-1)}(c)} + c_JP_J.
\]
Since the matrix entries~$A_{v^{(J-1)}(c)}(m, n)$ vanish when~$m + n > k_{J-1}$,
and~$A_{v^{(J-1)}(c)}u \le u$,
matters reduce to showing that
\begin{equation}
\label{eq:TwoCases}
 \left[c_JP_Ju\right](m) \le
 \begin{cases}
  c_J^2u(m)	&\text{when $m \le k_{J-1}$;}\\
  u(m)		&\text{otherwise.}
 \end{cases}
\end{equation}

It will turn out that equality holds in the first case above and also when~$k_J/2 < m \le k_J$,
while strict inequality holds otherwise.
Multiplying
the vector~$u$ by the matrix~$P_J$
lists the 
entries~$(u_0, u_1, \cdots u_{J})$ in reverse order,
and annihilates all other entries.
So the inequality for the second case above is
strict
when~$m > k_J$,
because~$\left[c_JP_Ju\right](m)$ vanishes
in that subcase, but~$u(m) > 0$.

To confirm equality in the subcase where~$k_J - k_{J-1} \le m \le k_J$, that is 
when~$m \in R_{J-1}$, recall
that~$u(m)$ is defined 
there
by listing the values of~$u$ on~$L_{J-1}$
in reverse order
and multiplying them by~$c_J$.
Forming~$\left[c_JP_Ju\right](m)$ does the same things.

The definition~$u(m)$ also makes~$\left[c_JP_Ju\right](m) = u(m)$
inside the second half of the gap
where~$k_{J-1} < m < k_J - k_{J-1}$.
In the first half of that gap, where~$k_{J-1} < m \le k_J/2$, the values~$\left[c_JP_Ju\right](m)$
are equal to~$c_Ju(k_j - m).$
This is equal to~$c_Ju(m)$ if~$m = k_J/2$,
and to~$c_J^2u(m)$ if~$m < k_J/2$,
making~$\left[c_JP_Ju\right](m) < u(m)$ in either case.
The analysis in the gap~$(k_{J-1}, k_J)$ is  even simpler
for the choice of~$u$ proposed in Remark~\ref{rm:simpler},
when there is strict inequality in the whole gap.

Finally, suppose that~$m \le k_{J-1}$, that is~$m \in L_{J-1}$. The values of~$c_JP_Ju$ on~$L_{J-1}$ are those of~$u$ on~$R_{J-1}$ listed in reverse order and multiplied by~$c_J$. On the other hand, the values of~$u$ on~$R_{J-1}$ come 
from those on~$L_{J-1}$ by another reversal of order and another multiplication by~$c_J$.
So~$\left[c_JP_Ju\right](m) = c_J^2u(m)$ again,
and inequality~\eqref{eq:TwoCases} holds.

The conclusion
that~$\left\|A_{v^{(J)}(c)}\right\|_\infty = 1$
in the second part of the lemma now follows if~$1$
is an eigenvalue~$A_{v^{(J)}(c)}$. For that purpose,
let~$u^{(J)}$ be the vector obtained from~$u$ by replacing all values of~$u$ off~$\Fold(K)\cap L_J$ by~$0$.
Now~$A_{v^{(J)}(c)}u^{(J)} = u^{(J)}$ if~$J=0$,
because~$v_0 = 1$.

Suppose that~$A_{v^{(J-1)}(c)}u^{(J-1)} = u^{(J-1)}$.
Then
\[
 [A_{v^{(J-1)}(c)}u^{(J)}](m) =
 \begin{cases}
  u^{(J)}(m) &\text{when~$m \le k_{J-1}$}\\
  0 & \text{otherwise.} 
 \end{cases}
\]
Proving that~$A_{v^{(J)}(c)}u^{(J)} = u^{(J)}$
therefore reduces to checking that
\[
  \left[c_JP_Ju^{(J)}\right](m) =
 \begin{cases}
  c_J^2u^{(J)}(m)	&\text{when $m \le k_{J-1}$;}\\
  u^{(J)}(m)		&\text{otherwise.}
 \end{cases}
\]
Both sides of the equation above vanish when~$m > k_J$
and also in the gap where~$k_{J-1} < m < k_J - k_{J-1}$.
For the same reasons as before, the two sides agree when~$k_J - k_{J-1} \le m \le k_J$
and when~$m \le k_{J-1}$.
\end{proof}

\begin{remark}
\label{rm:Include0}
Inequality~\eqref{eq:specialbest} also holds for strongly lacunary sets that do not contain~$0$, because it does when~$0 \in K$ and~$v_0 = 0$. To see that the constant~$\sqrt2$ is still best possible in those cases, again use the sequences~$v^{(J)}(c)$ specified in the second part of Lemma~\ref{th:Range}. The~$0$-th component of~$v^{(J)}(c)$ is
\[
 \prod_{j=1}^J \left(1 - |c_j|^2\right)
 = \prod_{j=1}^J \frac{j}{j+1} = \frac{1}{J+1},
\]
which does not vanish,
but tends to~$0$ as~$J \to \infty$.
\end{remark}

\begin{remark}
 When~$K$ is strictly lacunary,
 the argument in Section~\ref{sec:Folding}
 shows that
condition~\eqref{en:Schur}
 in the Schur test
is also satisfied by
 the sequence~$u$ arising from~$v$
itself rather than from~$c$.
This yields inquality~\eqref{eq:PaleyHankelInequality}
with~$C_K = 2$ rather than~$\sqrt2$.
Indeed, 
strict lacunarity makes
the number of indices~$k_i$ in the interval~$[m, 2m)$ 
at most~$1$; moreover,
if there is such an index,
then~$k_{i+1} > 2m$,
and the diagonal term in the~$m$-th row of~$A_v$ vanishes.
The outcome is the 
improvement
\[
\sum_{n=0}^\infty B(m, n)
\le
\max\{1 , \|v\|_2\} + (\|v\|_2)^2
\]
on the estimate~\eqref{eq:MixedUpperBound}.
By the Schur test, the right-hand side above is an upper bound for~$\|A_v\|_\infty$.
Rescaling~$v$ so that~$\|v\|_2 = 1$
makes that upper bound equal to~$2\|v\|_2$.
 
\end{remark}

\bibliographystyle{amsplain}

\end{document}